\documentclass[11pt]{amsart}

\usepackage{amsmath,amssymb,amsthm,mathtools}
\usepackage{fullpage}
\usepackage{enumitem}
\usepackage[colorlinks=true,citecolor=blue,linkcolor=blue,urlcolor=blue]{hyperref}

\title[Holographic functions and neural networks]{Holographic functions and neural networks}
\author{Bal\'azs Szegedy}

\address{Rényi Institute of Mathematics, Budapest, Hungary}
\email{szegedyb@gmail.com}
\date{\today}
\keywords{Boolean functions, neural networks, hypergraph regularity, sampling, polynomial approximation, graph limits}
\subjclass[2020]{Primary 68T07, 06E30; Secondary 05C80, 41A10, 60C05}

\newtheorem{theorem}{Theorem}[section]
\newtheorem{lemma}[theorem]{Lemma}
\newtheorem{proposition}[theorem]{Proposition}

\newtheorem{definition}[theorem]{Definition}
\newtheorem{remark}[theorem]{Remark}
\newtheorem{example}[theorem]{Example}

\newcommand{\E}{\mathbb E}
\newcommand{\Pp}{\mathbb P}
\newcommand{\N}{\mathbb N}
\newcommand{\R}{\mathbb R}
\newcommand{\1}{\mathbf 1}
\newcommand{\Lip}{\operatorname{Lip}}

\newcommand{\cube}{\{0,1\}}
\newcommand{\norm}[1]{\left\lVert #1\right\rVert}
\newcommand{\abs}[1]{\left\lvert #1\right\rvert}

\begin{document}
\maketitle

\begin{abstract}
A fuzzy Boolean function is a map $f:\cube^n\to [0,1]$, where $n\in\mathbb N$.  We introduce and compare three ways of saying that such a function has bounded complexity.  The first is a sampling property: the value $f(x)$ can be recovered, up to small error and with high probability, from the values of a bounded number of randomly chosen coordinates of $x$.  We call this the holographic property.  The second is a structural property: $f$ is uniformly close to a bounded-degree polynomial in boundedly many bounded linear coordinate forms. The third is computational: $f$ is uniformly close to the output of a neural network with a bounded number of non-input neurons, bounded Lipschitz activation functions and bounded incoming weights.  We prove that these three properties are equivalent up to quantitative changes of the parameters.  The implication from holography to polynomial structure uses a variant of a weak version of hypergraph regularity. 
\end{abstract}

\section{Introduction} 
We define a fuzzy Boolean function to be a map
\[
        f:\cube^n\longrightarrow [0,1].
\]
Such functions arise naturally in many contexts.  In machine learning, for
instance, the value \(f(x)\) may be interpreted as the probability that the
input string \(x\) has a certain learned property.  Since many classification problems do not
have an intrinsically sharp yes-or-no answer, the fuzzy Boolean framework is a
natural one. Artificial neural networks have proved remarkably effective in machine learning and this motivates the following broad question.

\begin{quote}
\itshape
Is there a mathematically robust complexity theory for fuzzy Boolean functions
which reflects their representability, or approximability, by neural networks
of bounded complexity?
\end{quote}

The representability of multivariable functions by neural networks is a broad
topic, with a large literature ranging from classical universal approximation
theorems \cite{Cybenko,Hornik,Pinkus} to quantitative approximation results for
specific architectures and smoothness classes \cite{Yarotsky}, as well as
modern studies of expressive power, depth, and complexity.

What differentiates our work is a particular dimension-independent structural
viewpoint.  We study representability questions for fuzzy Boolean functions
through a sampling property and a regularization principle related to
Szemer\'edi's famous regularity lemma \cite{Szemeredi,FriezeKannan} and to the more general hypergraph
regularity theory \cite{KNRSS,Gowers,Tao}. Roughly speaking, we show that bounded random recoverability, which we call
the {\bf holographic property}, is equivalent to approximation by bounded-degree
polynomials in boundedly many coordinate linear forms with bounded
\(\ell_1\)-norms. We then relate this structural statement to representability by neural networks satisfying strong boundedness conditions. 

In this way, we isolate one particular regime in which the question above has
a rigorous and conceptually transparent answer.  In addition, our results
suggest how certain characteristic features of neural networks, such as large
weighted averages followed by local nonlinearities, can emerge naturally from
abstract complexity notions.

The guiding observation is that, in many examples related to machine learning,
the information determining \(f(x)\) is, in a suitable sense,
\emph{holographically distributed} among the bits of \(x\).  Roughly speaking, this
means that the value of \(f(x)\) can be estimated, with high probability and
small error, from the values of a bounded number of sampled coordinates,
provided that the sampled locations are also known.

At first sight this may seem counterintuitive since a bounded sample appears to
contain only a negligible fraction of the input when \(n\) is large.  However,
in many high-dimensional data sets the relevant information is highly
redundant.  For example, if one considers images of increasing resolution, then
a sufficiently large but bounded random sample of pixels may already give substantial
information about the content of the image, even though it represents only a
small fraction of all pixels.  The holographic condition is an abstract
formalization of this kind of distributed redundancy.

We also note a possible connection with statistical physics.  Macroscopic
states of a system often arise from, and depend on, a very large number of
microscopic degrees of freedom.  The notions considered here may provide a
useful language for studying when such macroscopic observables can be recovered
from limited random information about the underlying microstate.

The next definition formalizes our main concept.

\begin{definition}[Holographic functions]
Let $k\in\N$ and $\varepsilon>0$.  A function $f:\cube^n\to[0,1]$ is called \emph{$(k,\varepsilon)$-holographic} if there exist probability measures $\mu_1,\ldots,\mu_k$ on $[n]=\{1,\ldots,n\}$ and functions
\[
        f_s:\cube^k\to[0,1],\qquad s=(s_1,\ldots,s_k)\in[n]^k,
\]
such that for every $x\in\cube^n$, if $S_1,\ldots,S_k$ are independent random variables with $S_j$ having law $\mu_j$, and $S=(S_1,\ldots,S_k)$, then
\[
        \Pp\left(\abs{ f(x)-f_S(x_{S_1},\ldots,x_{S_k})}\leq \varepsilon\right)\geq 1-\varepsilon .
\]
When $S=(S_1,\ldots,S_k)$, we write $x_S$ for the tuple $(x_{S_1},\ldots,x_{S_k})$.  The functions $f_s$ are called \emph{test functions} and $\mu_1,\ldots,\mu_k$ are called \emph{sampling measures}.  
\end{definition}

The definition intentionally allows the test function to depend on the sampled locations.  This is important since the same bit value may have different meanings at different coordinates.  

\begin{remark}
Interestingly, as we will show in Section~\ref{sec:idvsnonid}, the sampling
measures in the definition above may be taken to be identical at the qualitative
level, at the cost of changing the sample bound and the accuracy parameter. For the precise definition of qualitative equivalence between properties for fuzzy Boolean functions see Definition \ref{qeq}. The present formulation, which allows different sampling measures in different
query positions, is more convenient for the proof of the main theorem.  From a
conceptual point of view, however, the formulation with a single common
sampling measure is also natural and may be preferable in some applications.
\end{remark}

The main purpose of this paper is to compare several notions of bounded
complexity for fuzzy Boolean functions.  The key organizing concept is
\emph{qualitative equivalence}, which formalizes when two such notions can be
converted into one another with dimension-independent control of complexity and
accuracy. 

A two-parameter property \(P\) of fuzzy Boolean functions assigns, to each
pair \((K\in\mathbb{N},\varepsilon>0)\), a class \(P(K,\varepsilon)\) of fuzzy Boolean
functions, over all dimensions \(n\), satisfying the corresponding property
with complexity parameter \(K\) and accuracy \(\varepsilon\).  The holographic
property defined above is an example of such a property.

\begin{definition}[Qualitative implication and equivalence]\label{qeq}
Let \(P\) and \(Q\) be two-parameter properties of fuzzy Boolean functions.
We say that \(P\) qualitatively implies \(Q\), and write
\(P\preceq Q\), if for every \(\varepsilon>0\) there exists
\(\delta=\delta(\varepsilon)>0\) such that for every \(K\in\mathbb N\) there
exists \(K'=K'(K,\varepsilon)\in\mathbb N\) satisfying
\[
        P(K,\delta)\subseteq Q(K',\varepsilon).
\]
We say that \(P\) and \(Q\) are qualitatively equivalent if
\(P\preceq Q\) and \(Q\preceq P\).
\end{definition}

Now we turn to the second property which we call {\it polynomial}.  The polynomials that appear are not arbitrary polynomials in the $n$ coordinates.  They are polynomials in boundedly many bounded linear forms in the coordinates.  The coefficient $\ell^1$-norm of a polynomial always means the sum of the absolute values of its coefficients in the standard monomial basis.

\begin{definition}[Polynomial property]
Let $K\in\mathbb N$ and $\varepsilon>0$. A function
$f:\{0,1\}^n\to[0,1]$ has the $(K,\varepsilon)$-polynomial property
if there exist an integer $m$ with $1\leq m\leq K$, linear functions
\[
        L_i(x)=\sum_{j=1}^n w_{ij}x_j
\]
with
\[
        \sum_{j=1}^n |w_{ij}|\leq K,
\]
and a polynomial $p\in\mathbb R[y_1,\ldots,y_m]$ of degree at most $K$
and coefficient $\ell^1$-norm at most $K$, such that
\[
        |f(x)-p(L_1(x),\ldots,L_m(x))|\leq\varepsilon
\]
for every $x\in\{0,1\}^n$. 
\end{definition}

The third property is neural-network representability. The following model is deliberately broad and somewhat nonstandard. The activation functions are allowed to vary from neuron to neuron, but their Lipschitz constants, ranges, and incoming affine norms are uniformly controlled by the complexity parameter.

\begin{definition}[Bounded Lipschitz neural networks]
A \emph{bounded Lipschitz neural network} on $\cube^n$ is a finite directed acyclic graph with exactly $n$ input vertices, one carrying each coordinate function $x_1,\ldots,x_n$.  Every non-input vertex $v$ carries an affine form
\[
        L_v(z)=c_v+\sum_{u\prec v} w_{v,u}z_u
\]
in the values of earlier vertices and an activation function $\sigma_v:\R\to[0,1]$.  The value at $v$ is $z_v=\sigma_v(L_v(z))$.  One distinguished non-input vertex is the output vertex.

The \emph{complexity} of the network is at most $K$ if the number of non-input vertices is at most $K$ and, for every non-input vertex $v$,
\[
        \abs{c_v}+\sum_{u\prec v}\abs{w_{v,u}}\leq K,
        \qquad
        \Lip(\sigma_v)\leq K .
\]
A non-input vertex may receive edges from any subset of the input vertices, including a subset whose size depends on $n$, provided the total incoming $\ell^1$-weight is bounded as above.  The number of nonzero incoming edges from input vertices is not counted separately.
\end{definition}

\begin{remark}
Bounded Lipschitz neural networks are not required to have a bounded total
number of parameters.  Although the number of non-input neurons is bounded, the
number of input vertices may grow with the ambient dimension.
\end{remark}

\begin{definition}[Bounded neural-network property]
Let $K\in\N$ and $\varepsilon>0$.  A function $f:\cube^n\to[0,1]$ has the \emph{$(K,\varepsilon)$-bounded neural-network property} if there exists a bounded Lipschitz neural network of complexity at most $K$ computing a function $F:\cube^n\to[0,1]$ such that
\[
        \norm{f-F}_{\infty}\leq\varepsilon .
\]
\end{definition}

Now we are ready to state the main theorem which establishes qualitative equivalence of the above three properties.

\begin{theorem}[Main theorem]\label{thm:main}
The holographic property, the polynomial property and the bounded neural-network property are qualitatively equivalent in the sense of Definition \ref{qeq}.
More explicitly:
\begin{enumerate}[label=\textup{(\alph*)}]
    \item for every $k$ and $\varepsilon\in(0,1)$ there is $K_1=K_1(k,\varepsilon)\in\N$ such that every $(k,\varepsilon)$-holographic function has the $(K_1,3\varepsilon)$-polynomial property;
    \item for every $K$ there is $K_2=K_2(K)\in\N$ such that every function with the $(K,\varepsilon)$-polynomial property has the $(K_2,\varepsilon)$-bounded neural-network property;
    \item for every $K$ and $\varepsilon\in(0,1)$ there is $K_3=K_3(K,\varepsilon)\in\N$ such that every function with the $(K,\varepsilon)$-bounded neural-network property is $(K_3,3\varepsilon)$-holographic.
\end{enumerate}
\end{theorem}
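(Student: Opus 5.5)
The plan is to prove the three implications separately. Part (a) will be the substantial one and will use a weak (Frieze--Kannan type) regularity lemma for $k$-dimensional arrays. Parts (b) and (c) are direct constructions.

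\emph{(b) Polynomial $\Rightarrow$ neural network.} Let $|f-p(L_1,\dots,L_m)|\le\varepsilon$. Each $L_i(x)$ lies in $[-K,K]$.
\begin{itemize}
\item First layer: for each $i$, one neuron computes $u_i=\sigma\bigl((L_i+K)/(2K)\bigr)$, where $\sigma$ is the clipping map onto $[0,1]$. This neuron has incoming $\ell^1$-norm at most $1$ and Lipschitz constant $1$, and $u_i=(L_i+K)/(2K)$ exactly on the cube.
\item Rewriting $p$: since $L_i=2Ku_i-K$, the map $p(L)$ becomes a polynomial $q(u)$ of degree at most $K$. Its coefficient norm is bounded by a function of $K$ (at most $K(3K)^K$).
\item Monomials: every monomial of $q$ is built from the values $u_i\in[0,1]$ by repeated products. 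Each product $ab$ of two numbers in $[0,1]$ is $\frac12\bigl((a+b)^2/2\cdot 2-a^2-b^2\bigr)$, i.e.\ a bounded combination of squaring neurons. A squaring neuron uses the activation $t\mapsto \min(\max(t,0),2)^2/4$, which has bounded Lipschitz constant and values in $[0,1]$ after rescaling. Rescaled affine combinations stay in $[0,1]$, so all intermediate values remain in $[0,1]$.
\item Output: the number of monomials is at most $\binom{m+K}{K}$, so boundedly many neurons suffice. A final neuron forms $q(u)$ as an affine combination with bounded weights and applies the clipping activation to $[0,1]$. Since $f\in[0,1]$, clipping does not increase the error.
\end{itemize}
This gives an exact simulation, hence error $\varepsilon$, with $K_2$ depending only on $K$.

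\emph{(c) Neural network $\Rightarrow$ holographic.} Let $F$ be computed by a network of complexity $K$ with $\|f-F\|_\infty\le\varepsilon$.
\begin{itemize}
\item Sampling: for each non-input neuron $v$ with input weights $(w_{v,j})_j$ and $W_v=\sum_j|w_{v,j}|>0$, use $r$ sampling coordinates, each with law $\mu_v(j)=|w_{v,j}|/W_v$. The total number of samples is $k=rK$.
\item Estimator: the quantity $W_v\operatorname{sign}(w_{v,j})x_j$ with $j\sim\mu_v$ is an unbiased estimator of $\sum_j w_{v,j}x_j$ and is bounded by $K$. By Hoeffding's inequality, the average of $r$ samples is within $\eta$ of the true value with probability at least $1-2e^{-r\eta^2/(2K^2)}$.
\item Test function: $f_s$ runs the network with these estimated input contributions in place of the true ones, and the sampled locations $s$ are known to it. Because every neuron is $K$-Lipschitz with incoming weight $\ell^1$-norm at most $K$, an input-level error $\eta$ propagates through at most $K$ layers to an output error of at most $C(K)\eta$.
\item Parameters: choose $\eta$ with $C(K)\eta\le\varepsilon$, then $r$ so that a union bound over the $K$ neurons gives failure probability at most $\varepsilon$. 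Then $|f-f_S(x_S)|\le2\varepsilon$ with probability at least $1-\varepsilon$.
\end{itemize}

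\emph{(a) Holographic $\Rightarrow$ polynomial.} This is the main step.
\begin{itemize}
\item Averaging: set $g(x)=\E f_S(x_S)$. Since values lie in $[0,1]$, the holographic condition gives $|f-g|\le 2\varepsilon$.
\item Expansion: expand each test function multilinearly as $f_s(y)=\sum_{A\subseteq[k]}c_{s,A}\prod_{j\in A}y_j$ with $|c_{s,A}|\le 2^k$. Then
\[
g(x)=\sum_{A}\sum_{s}\mu(s)\,c_{s,A}\prod_{j\in A}x_{s_j},\qquad \mu=\mu_1\times\cdots\times\mu_k .
\]
For each $A$, this is the pairing of a bounded array $T_A(s)=c_{s,A}$ on $([n]^k,\mu)$ with the product test function $\prod_{j\in A}x_{s_j}$. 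Each factor $x_{s_j}$ is the indicator of a set in coordinate $j$.
\item Weak regularity: apply a weak regularity lemma for $k$-dimensional arrays in $L^2(\mu)$, proved by the standard energy-increment argument. It yields
\[
T'_A(s)=\sum_{t\le r}a_t\prod_{j}\1_{B_{t,j}}(s_j),\qquad r\le\eta^{-2},\quad |a_t|\le C(k,\eta),
\]
such that the $\mu$-weighted cut norm of $T_A-T'_A$ is at most $2^k\eta$. The cut norm is a supremum over product sets, and the test functions are exactly indicators of product sets in this sense. So the cut-norm bound gives $|g(x)-g'(x)|\le 2^{2k}\eta$ uniformly over all $x\in\{0,1\}^n$.
\item Reading off the polynomial: $g'$ evaluates to $\sum_t a_t\prod_j L_{t,j}(x)$ with $L_{t,j}(x)=\sum_i\mu_j(i)\1_{B_{t,j}}(i)x_i$. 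These are linear forms with $\ell^1$-norm at most $1$, combined by a polynomial of degree at most $k$ with bounded coefficients and boundedly many variables. Choosing $2^{2k}\eta\le\varepsilon$ gives the $(K_1,3\varepsilon)$-polynomial property.
\end{itemize}

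The main obstacle is getting the regularity lemma in exactly this form. It must hold for arbitrary, possibly distinct, measures $\mu_j$. It must give coefficient bounds on the $a_t$, obtainable by projecting onto the $\sigma$-algebra generated by the cells and bounding the $L^\infty$ norm. And the approximation must be uniform in $x$, not just in $L^2$.
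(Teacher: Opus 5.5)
Your proof is correct. Part (b) is essentially the paper's construction: a clipped rescaled first layer, polarization-based multiplication modules, and a clipped output. Parts (a) and (c) take different routes.

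\emph{Part (a).} The paper splits $F(x)=\E f_S(x_S)$ by bit patterns $\alpha\in\cube^k$. Its arrays $g_\alpha(s)=f_s(\alpha)$ are $[0,1]$-valued and are tested against the boxes $\prod_j\{i:x_i=\alpha_j\}$. It then applies a partition-type (conditional-expectation) regularity lemma with one common partition, and the zero bits force the complementary factors $a_{\ell,u}-B_{\ell,u}$. Your monomial expansion reduces the test sets to just $X=\{i:x_i=1\}$ or $[n]$ in each coordinate. A Frieze--Kannan sum-of-boxes decomposition of each $T_A$ then becomes products of the forms $L_{t,j}$ directly.

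What this buys: your $K_1$ is exponential in $k$ but only polynomial in $1/\varepsilon$. The paper's is $4^kq^k$ with $q\le 2^{k\lceil 8^k\varepsilon^{-2}\rceil}$. In exchange, the paper's version keeps all step values in $[0,1]$ and proves its lemma in full.

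Two small corrections to (a):
\begin{enumerate}
\item The coefficient bound needs no projection. Let $R$ be the current remainder and $Q$ the chosen box in the greedy step. Cauchy--Schwarz gives $|a_t|=|\langle R,\1_Q\rangle|/\mu(Q)\le\norm{R}_2^2/\eta$. Projecting onto the generated partition also works, but then the number of terms becomes exponential in $\eta^{-2}$ rather than $\eta^{-2}$.
\item Your formula for $g'$ should read $\sum_A\sum_t a^A_t\prod_{j\notin A}\mu_j(B^A_{t,j})\prod_{j\in A}L^A_{t,j}(x)$, with the constants for $j\notin A$ absorbed into the coefficients.
\end{enumerate}

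\emph{Part (c).} The paper re-estimates each non-input predecessor with fresh independent samples, recursively. This unfolds the network into a tree, so sample counts multiply by up to $K$ per level. You instead estimate each neuron's direct-input form once and propagate the plug-in values forward. Reusing one estimate along several paths is harmless, because your error bound is deterministic on the event that all (at most $K$) direct-input estimates succeed. So $k=rK$ samples suffice, padded with dummies for neurons without input edges. This is a simpler and cheaper argument than the paper's.
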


The proof of Theorem \ref{thm:main} follows directly from three major propositions proved in
the subsequent sections of the paper. Proposition~\ref{prop:holo-poly} gives part \textup{(a)},
Proposition~\ref{prop:poly-nn} gives part \textup{(b)}, and
Proposition~\ref{prop:nn-holo} gives part \textup{(c)}.  
The constants in the proof are not optimized.  The implication from holography to polynomial structure uses a regularity lemma and therefore has relatively poor quantitative dependence in general.  The other two implications have more elementary dependences once the dimension of the polynomial representation or the network complexity is fixed.  

\begin{remark}
Theorem~\ref{thm:main} establishes cyclic qualitative implication among the
three properties above.  Since qualitative implication is transitive, it follows
that the three properties are qualitatively equivalent.
\end{remark}

\section{Elementary examples}

\begin{example}[Functions of weighted coordinate averages]
Let $\mu$ be a probability measure on $[n]$ and let
\[
        f(x)=\sigma\left(\sum_{i=1}^n \mu(i)x_i\right),
\]
where $\sigma:[0,1]\to[0,1]$ is $1$-Lipschitz.  Then, for every $\varepsilon\in(0,1)$, $f$ is $(r,\varepsilon)$-holographic with
\[
        r=O(\varepsilon^{-2}\log(1/\varepsilon)).
\]
Indeed, sample $r$ independent coordinates from $\mu$ and use the test function
\[
        f_s(\alpha_1,\ldots,\alpha_r)
        =
        \sigma\left(\frac1r\sum_{j=1}^r\alpha_j\right).
\]
The empirical average of the sampled bits approximates $\sum_i\mu(i)x_i$ with probability at least $1-\varepsilon$, uniformly in $x$, by Hoeffding's inequality.
\end{example}

\begin{example}[Small juntas]
If $f$ depends on at most $r$ coordinates $i_1,\ldots,i_r$, then $f$ is holographic with $k=r$ by taking $\mu_j$ to be the point mass at $i_j$ and using the test function that reads these coordinates.  In the more restrictive iid version of the definition one would need a coupon-collector bound, for example $k=O(r\log(r/\varepsilon))$ samples from the uniform distribution on the relevant coordinates.
\end{example}

\begin{example}[A non-example]
It is easy to see that the parity function $x\mapsto x_1+\cdots+x_n\pmod 2$ is not holographic with bounded $k$ and error below $1/2$ as $n\to\infty$. A stronger form of this obstruction follows from Lemma~\ref{lem:test-average}:
if \(n\) is large compared with \(k\), then most coordinates must have small
influence on any \((k,\varepsilon)\)-holographic function.  Thus highly
sensitive functions such as parity cannot satisfy the holographic property with
bounded sample complexity.
\end{example}

\section{From holography to polynomials}

We begin with a simple observation.  If a holographic test succeeds with high probability, then its expectation gives a uniformly good approximation.

\begin{lemma}[Averaging the tests]\label{lem:test-average}
Let $f:\cube^n\to[0,1]$ be $(k,\varepsilon)$-holographic with sampling measures $\mu_1,\ldots,\mu_k$ and test functions $f_s$.  Define
\[
        F(x)=\E f_S(x_{S_1},\ldots,x_{S_k}),
\]
where $S_1,\ldots,S_k$ are independent and $S_j$ has law $\mu_j$.  Then
\[
        \norm{f-F}_{\infty}\leq 2\varepsilon .
\]
\end{lemma}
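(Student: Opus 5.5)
The plan is to fix an arbitrary $x\in\cube^n$ and bound $\abs{f(x)-F(x)}$ pointwise by splitting according to whether the holographic test succeeds. Since $f(x)$ is a constant once $x$ is fixed, we may write
\[
        f(x)-F(x)=\E\bigl(f(x)-f_S(x_S)\bigr),
\]
where $S=(S_1,\ldots,S_k)$ has independent coordinates with laws $\mu_j$. We then apply the triangle inequality for expectations to get $\abs{f(x)-F(x)}\leq \E\abs{f(x)-f_S(x_S)}$. This step needs only the measurability/finiteness of the sample space, which is automatic here since $S$ takes values in the finite set $[n]^k$.

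Next, let $A$ be the event $\{\abs{f(x)-f_S(x_S)}\leq\varepsilon\}$. By the definition of $(k,\varepsilon)$-holographic, applied to this specific $x$, we have $\Pp(A^c)\leq\varepsilon$. We split the expectation as
\[
        \E\bigl[\abs{f(x)-f_S(x_S)}\1_A\bigr]+\E\bigl[\abs{f(x)-f_S(x_S)}\1_{A^c}\bigr].
\]
On $A$ the integrand is at most $\varepsilon$, so the first term is at most $\varepsilon\Pp(A)\leq\varepsilon$. On $A^c$ we use only that both $f(x)$ and $f_S(x_S)$ lie in $[0,1]$, so their difference is at most $1$ in absolute value. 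Hence the second term is at most $\Pp(A^c)\leq\varepsilon$.

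Adding the two bounds gives $\abs{f(x)-F(x)}\leq 2\varepsilon$. Since $x$ was arbitrary and the holographic guarantee is uniform in $x$, taking the supremum over $x$ yields $\norm{f-F}_\infty\leq 2\varepsilon$. There is no real obstacle. The only point to watch is that the bad event has bounded cost because the test functions take values in $[0,1]$; without that range restriction the averaging would fail. It is also worth noting that the bound can be sharpened to $\varepsilon(1-\varepsilon)+\varepsilon$, but $2\varepsilon$ suffices for the statement.
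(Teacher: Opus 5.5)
Your proof is correct and matches the paper's argument: fix $x$, bound the deviation by $\varepsilon$ on the success event and by $1$ on the exceptional event, which has probability at most $\varepsilon$, to get $2\varepsilon$. Your side remark that this sharpens to $\varepsilon(1-\varepsilon)+\varepsilon$ (the paper writes the same bound) holds only for $\varepsilon\leq 1$, which is the relevant range, while the $2\varepsilon$ bound you prove holds for every $\varepsilon>0$.
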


\begin{proof}
Fix $x$.  With probability at least $1-\varepsilon$, the random variable $f_S(x_S)$ differs from $f(x)$ by at most $\varepsilon$.  On the exceptional event it differs by at most $1$.  Hence
\[
        \abs{F(x)-f(x)}\leq (1-\varepsilon)\varepsilon+\varepsilon\leq2\varepsilon .
\]
\end{proof}

For each pattern $\alpha=(\alpha_1,\ldots,\alpha_k)\in\cube^k$, define a coefficient array
\[
        g_\alpha(s)=f_s(\alpha),\qquad s\in[n]^k.
\]
Then
\[
        F(x)=\sum_{\alpha\in\cube^k}\E
        \left[g_\alpha(S)\prod_{j=1}^k \1_{x_{S_j}=\alpha_j}\right].
\]
The problem is that the arrays $g_\alpha$ may be arbitrary functions on $[n]^k$.  The following regularity lemma says that, for the purpose of integration over product boxes, all these arrays can be simultaneously approximated by arrays that are constant on the cells of a bounded partition.

\begin{theorem}[Box regularity lemma]\label{thm:box-regularity}
For every $\eta>0$ and $k,t\in\N$ there exists $M=M(k,t,\eta)\in\N$ with the following property.  Let $\mu_1,\ldots,\mu_k$ be probability measures on $[n]$ and let
\[
        g_1,\ldots,g_t:[n]^k\to[0,1].
\]
Then there is a partition $[n]=T_1\cup\cdots\cup T_m$ with $m\leq M$ such that for every $i\in\{1,\ldots,t\}$ there is a step array
\[
        W_i:[m]^k\to[0,1]
\]
for which, for all product sets $A_1\times\cdots\times A_k\subseteq[n]^k$,
\[
\left|
\E\left[g_i(S)\prod_{j=1}^k\1_{S_j\in A_j}\right]
-
\sum_{u\in[m]^k} W_i(u)\prod_{j=1}^k\mu_j(A_j\cap T_{u_j})
\right|
\leq \eta ,
\]
where $S_1,\ldots,S_k$ are independent and $S_j$ has law $\mu_j$.  One may take, for instance,
\[
        M(k,t,\eta)=2^{k\lceil t\eta^{-2}\rceil}.
\]
\end{theorem}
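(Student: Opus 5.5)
We prove this with the standard energy-increment (Frieze--Kannan weak regularity) argument in the Hilbert space $L^2([n]^k,\mu)$, where $\mu=\mu_1\times\cdots\times\mu_k$, run simultaneously for all $t$ arrays. A partition $\mathcal P$ of $[n]$ into $T_1,\dots,T_m$ induces the $\sigma$-algebra $\mathcal B_{\mathcal P}$ on $[n]^k$ generated by the boxes $T_{u_1}\times\cdots\times T_{u_k}$. For each $i$ we let $W_i(u)$ be the $\mu$-average of $g_i$ on the box indexed by $u$ (an arbitrary value in $[0,1]$ if that box has measure zero). Then $W_i\in[0,1]$, and the step function $\tilde g_i=\E[g_i\mid\mathcal B_{\mathcal P}]$ satisfies
\[
\E\Bigl[\tilde g_i(S)\prod_j\1_{S_j\in A_j}\Bigr]=\sum_u W_i(u)\prod_j\mu_j(A_j\cap T_{u_j})
\]
by the product structure of $\mu$. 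So it suffices to find $\mathcal P$ such that $\abs{\E[(g_i-\tilde g_i)(S)\prod_j\1_{S_j\in A_j}]}\le\eta$ for all $i$ and all boxes. We track the total energy $\mathcal E(\mathcal P)=\sum_{i=1}^t\norm{\E[g_i\mid\mathcal B_{\mathcal P}]}_2^2$, which lies in $[0,t]$ and is monotone under refinement.

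The iteration step is as follows. Suppose some $i$ and some box $A_1\times\cdots\times A_k$ violate the bound. Refine $\mathcal P$ by intersecting each part with $A_j$ and with $[n]\setminus A_j$ for every $j=1,\dots,k$. This multiplies the number of parts by at most $2^k$, and the indicator of the bad box becomes $\mathcal B_{\mathcal P'}$-measurable. Set $h=g_i-\E[g_i\mid\mathcal B_{\mathcal P}]$. Then $\E[g_i\mid\mathcal B_{\mathcal P'}]-\E[g_i\mid\mathcal B_{\mathcal P}]=\E[h\mid\mathcal B_{\mathcal P'}]$, and this difference is orthogonal to $L^2(\mathcal B_{\mathcal P})$. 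Pythagoras therefore gives an energy increase
\[
\norm{\E[h\mid\mathcal B_{\mathcal P'}]}_2^2\ \ge\ \langle h,\1_{A}\rangle^2/\norm{\1_A}_2^2\ \ge\ \eta^2 ,
\]
using Cauchy--Schwarz and the fact that $\1_A$ is $\mathcal B_{\mathcal P'}$-measurable with $\norm{\1_A}_2\le1$. The energies of the other $g_{i'}$ do not decrease.

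Starting from the trivial partition, the process stops after at most $\lceil t\eta^{-2}\rceil$ steps, since each step adds at least $\eta^2$ to an energy bounded by $t$. The resulting number of parts is at most $2^{k\lceil t\eta^{-2}\rceil}$, which is the stated $M$; if the process stops earlier, we pad with empty parts or simply note $m\le M$.

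The only delicate point is bookkeeping rather than a real obstacle. We must check that averaging on boxes of measure zero is harmless, and that refining all $k$ coordinates of a single partition of $[n]$ (rather than $k$ separate partitions) keeps both the box-measurability and the $2^k$ growth factor. Intersecting every part with each $A_j$ and its complement handles both at once.
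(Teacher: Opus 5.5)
Your proposal is correct and is essentially the paper's proof. The paper also runs a simultaneous energy increment on $\sum_{i=1}^t\norm{\E(g_i\mid\mathcal P^k)}_2^2$ with respect to $\mu_1\times\cdots\times\mu_k$, refines one common partition of $[n]$ by the $k$ violating sets (a factor of at most $2^k$ per step), gets an increment of at least $\eta^2$ from Cauchy--Schwarz plus the martingale/Pythagoras identity, and reads off $W_i$ as box averages, with arbitrary values in $[0,1]$ on null boxes; the only cosmetic difference is that the paper phrases the iteration as a lemma starting from an arbitrary initial partition $\mathcal P_0$, then applies it with the trivial partition.
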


\begin{remark}
Theorem~\ref{thm:box-regularity} is a standard weak cut-norm, or weak hypergraph-regularity, statement for bounded measurable $k$-arrays tested against product boxes; it is in the same spirit as weak graph and hypergraph regularity results such as those in \cite{FriezeKannan,Gowers,Tao,RodlSchacht}.  A direct simultaneous energy-increment proof, allowing the coordinate measures $\mu_1,\ldots,\mu_k$ to be distinct while using one common partition of $[n]$, is included in Section~\ref{sec:regularization}.  Equivalently, the conditional expectations used there are taken with respect to the product measure $\mu_1\times\cdots\times\mu_k$, so distinct coordinate measures cause no additional difficulty.  Cells of measure zero cause no ambiguity in the displayed formula, since all terms involving such cells are multiplied by zero.
\end{remark}

We now prove the first implication in the main theorem.

\begin{proposition}[Holography implies polynomial structure]\label{prop:holo-poly}
For every $k\in\N$ and $\varepsilon\in(0,1)$ there exists $K=K(k,\varepsilon)\in\N$ such that every $(k,\varepsilon)$-holographic function has the $(K,3\varepsilon)$-polynomial property.
\end{proposition}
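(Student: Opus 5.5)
We combine Lemma~\ref{lem:test-average} with Theorem~\ref{thm:box-regularity} applied to the $t=2^k$ arrays $g_\alpha$, $\alpha\in\cube^k$. By Lemma~\ref{lem:test-average}, $\norm{f-F}_\infty\le 2\varepsilon$ with
\[
F(x)=\sum_{\alpha\in\cube^k}\E\Big[g_\alpha(S)\prod_{j=1}^k\1_{x_{S_j}=\alpha_j}\Big].
\]
For fixed $x$, each summand is a box integral: with $A_j^{(x)}=\{i\in[n]:x_i=\alpha_j\}$ (so $A_j^{(x)}$ equals either $\{x_i=1\}$ or $\{x_i=0\}$), we have $\prod_j\1_{x_{S_j}=\alpha_j}=\prod_j\1_{S_j\in A_j^{(x)}}$. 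We apply Theorem~\ref{thm:box-regularity} with $t=2^k$ and $\eta=\varepsilon 2^{-k}$. This gives a partition $T_1,\dots,T_m$ with $m\le M(k,2^k,\varepsilon2^{-k})$ and step arrays $W_\alpha$ such that, uniformly in $x$ (the lemma holds for all boxes simultaneously),
\[
\Big|F(x)-\sum_{\alpha}\sum_{u\in[m]^k}W_\alpha(u)\prod_{j=1}^k\mu_j\big(A_j^{(x)}\cap T_{u_j}\big)\Big|\le 2^k\eta=\varepsilon .
\]
Combined with the first bound this yields total error $3\varepsilon$, as required.

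It remains to express the approximant as a polynomial in bounded linear forms. We define, for $j\in[k]$ and $r\in[m]$,
\[
L_{j,r}(x)=\sum_{i\in T_r}\mu_j(i)x_i=\mu_j\big(\{i\in T_r:x_i=1\}\big).
\]
Each of these has weight $\ell^1$-norm at most $1$, and there are $km$ of them. Then $\mu_j(A_j^{(x)}\cap T_r)$ equals $L_{j,r}(x)$ if $\alpha_j=1$ and equals $\mu_j(T_r)-L_{j,r}(x)$ if $\alpha_j=0$, where $\mu_j(T_r)\in[0,1]$ is a constant. The approximant is therefore
\[
p(L)=\sum_\alpha\sum_u W_\alpha(u)\prod_{j}\ell_{j,u_j}^{\alpha_j}
\]
with each factor $\ell$ of the form $y$ or $c-y$, $0\le c\le1$. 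This is a polynomial of degree at most $k$. When the product of $k$ such factors is expanded, its coefficient $\ell^1$-norm is at most $2^k$. Since $0\le W_\alpha\le 1$, the total coefficient $\ell^1$-norm is at most $2^k\cdot m^k\cdot 2^k$. Setting $K=\max(km,\,k,\,4^k m^k)$ with $m$ replaced by its bound $M$ gives a constant depending only on $k$ and $\varepsilon$. If needed, we pad with zero forms so that the number of forms is at least $1$.

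The real content is the uniformity in $x$, and this is supplied entirely by Theorem~\ref{thm:box-regularity}. That theorem controls the error for every product box at once, while the partition and the $W_\alpha$ are chosen independently of $x$. The main point to check carefully is therefore that the family of boxes $A_1^{(x)}\times\cdots\times A_k^{(x)}$, ranging over all $x$ and $\alpha$, is covered by that single uniform statement; it is, since these are genuine product sets. A second point is that no dependence on $n$ creeps into the coefficient bounds. This holds because the constants $\mu_j(T_r)$ lie in $[0,1]$ and the forms have $\ell^1$-weight at most $1$ regardless of $n$. Everything else is bookkeeping.
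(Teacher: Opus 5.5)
Your proposal is correct and follows the paper's proof essentially step by step. Like the paper, you average the tests, apply the box regularity lemma to the $2^k$ arrays $g_\alpha$ with $\eta=\varepsilon 2^{-k}$, and write each mass $\mu_j(A_j\cap T_r)$ as $L_{j,r}$ or $\mu_j(T_r)-L_{j,r}$, which gives a degree-$k$ polynomial with coefficient $\ell^1$-norm at most $4^k m^k$. Your choice $K=\max(km,k,4^k m^k)$ is just a more explicit form of the paper's $K=4^k M^k$, which already dominates both $kM$ and $k$.
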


\begin{proof}
Let $f:\{0,1\}^n\to [0,1]$ be $(k,\varepsilon)$-holographic.  Let $\mu_1,\ldots,\mu_k$ and $f_s$ be as in the definition, and let $F$ be the average test function from Lemma~\ref{lem:test-average}.  For each $\alpha\in\cube^k$, let $g_\alpha(s)=f_s(\alpha)$.  Apply Theorem~\ref{thm:box-regularity} to the $t=2^k$ arrays $g_\alpha$ with parameter
\[
        \eta=\varepsilon/2^k.
\]
We obtain a partition $[n]=T_1\cup\cdots\cup T_q$, $q\leq M(k,2^k,\eta)$, and step arrays $W_\alpha:[q]^k\to[0,1]$.

For $\ell\in[k]$ and $u\in[q]$, write
\[
        a_{\ell, u}=\mu_\ell(T_u),\qquad
        B_{\ell, u}(x)=\sum_{i\in T_u}\mu_\ell(i)x_i.
\]
Note that $a_{\ell, u}$ is a constant and $B_{\ell, u}$ is a linear function in the variable vector $x$. Each linear form $B_{\ell, u}$ has coefficient $\ell^1$-norm at most $1$ and $0\leq a_{\ell, u}\leq 1$ holds for every $\ell,u$. The $kq$ functions \(B_{\ell, u}\) are precisely the linear forms that will be counted in the polynomial representation, and their number will be absorbed into the final complexity parameter \(K\). Define the linear polynomials
\[
        C_{\ell,u,1}=B_{\ell, u},\qquad C_{\ell,u,0}=a_{\ell, u}-B_{\ell, u}
\]
and the polynomial $P$ in the $kq$ variables $(B_{\ell u})_{\ell,u}$ by
\[
        P\bigl((B_{\ell, u})_{\ell,u}\bigr)
        =\sum_{\alpha\in\cube^k}\sum_{u\in[q]^k} W_\alpha(u)
        \prod_{j=1}^k C_{j,u_j,\alpha_j}.
\]

Note that there is a slight abuse of notation in the definition of \(P\) and
\(C_{\ell,u,\beta}\).  They are treated as polynomials in the formal variables
\((B_{\ell,u})_{\ell,u}\), and their coefficient norms are computed in this
polynomial ring.  On the other hand, each \(B_{\ell,u}\) represents a linear
form on the coordinates of \(x\in\{0,1\}^n\).  Thus, after substituting these
linear forms into \(P\), we may also evaluate \(P\) on Boolean vectors \(x\).

The polynomial $P$ has degree at most $k$.  Its coefficient $\ell^1$-norm is bounded explicitly by
\[
        \norm{P}_{\ell^1}
        \leq
        \sum_{\alpha\in\cube^k}\sum_{u\in[q]^k} W_\alpha(u)
        \prod_{j=1}^k \norm{C_{j,u_j,\alpha_j}}_{\ell^1}
        \leq 2^k q^k\cdot 2^k
        =4^k q^k.
\]

For a fixed $x$, the set of indices with bit value $\alpha_j$ is
\[
        A_j(\alpha_j,x)=\{i\in[n]:x_i=\alpha_j\}.
\]
The regularity lemma gives, for each $\alpha$,
\[
\left|
\E\left[g_\alpha(S)\prod_{j=1}^k \1_{x_{S_j}=\alpha_j}\right]
-
\sum_{u\in[q]^k} W_\alpha(u)\prod_{j=1}^k C_{j,u_j,\alpha_j}(x)
\right|\leq\eta .
\]
Summing over the $2^k$ patterns $\alpha$ shows that
\[
        \norm{F-P((B_{\ell, u})_{\ell,u})}_{\infty}\leq 2^k\eta=\varepsilon .
\]
Together with Lemma~\ref{lem:test-average}, this gives
\[
        \norm{f-P((B_{\ell, u})_{\ell,u})}_{\infty}\leq3\varepsilon.
\]
Taking $K$ to be $4^k M(k,2^k,\eta)^k$ gives a complexity parameter depending only on $k$ and $\varepsilon$.  The $kq$ linear functions $B_{\ell, u}$, together with the polynomial $P$, therefore satisfy the formal polynomial property.
\end{proof}

\section{From polynomials to neural networks}

The second implication is a finite-dimensional algebraic realization statement.  Once a function is expressed as a polynomial in boundedly many bounded linear forms, a bounded-size neural network with bounded weights and bounded Lipschitz constants can compute those linear forms and then compute the clipped polynomial.  

\begin{lemma}[Exact neural representation of polynomials]\label{lem:universal}
Let \(d,D\in\mathbb N\) and \(C\geq1\).  There exists
\(R=R(d,D,C)\in\mathbb N\) such that for every polynomial
\(Q\in\mathbb R[y_1,\ldots,y_d]\) of degree at most \(D\) and coefficient
\(\ell^1\)-norm at most \(C\), the function
\[
        y\longmapsto \chi(Q(y)),\qquad
        \chi(t)=\min(1,\max(0,t)),
\]
on \([0,1]^d\) is computed exactly by a bounded Lipschitz neural network of
complexity at most \(R\) in the auxiliary real-input model.
\end{lemma}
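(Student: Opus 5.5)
Since every non-input neuron has range in $[0,1]$, each intermediate quantity has to be encoded as an affine image of a $[0,1]$-valued activation, so that later neurons can undo the scaling inside their incoming affine forms. The auxiliary real-input model has inputs $y_1,\ldots,y_d\in[0,1]$, and every monomial of degree at most $D$ in these inputs takes values in $[0,1]$. The plan is therefore to compute each monomial $y^\beta$ with $|\beta|\le D$ exactly by a bounded chain of neurons, and then feed all monomials into a single output neuron. There are at most $N=\binom{d+D}{D}$ monomials, which depends only on $(d,D)$.

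\textbf{Step 1: exact products.} Use the polarization identity $ab=\tfrac14\bigl((a+b)^2-(a-b)^2\bigr)$. For $a,b\in[0,1]$:
\begin{itemize}
\item a neuron with activation $\sigma(t)=\min(1,\max(0,t))^2$ (Lipschitz constant $2$) and affine form $(a+b)/2$ outputs $z_1=((a+b)/2)^2$;
\item a neuron with activation $\sigma(t)=\min(1,t^2)$ (Lipschitz constant $2$) and affine form $(a-b)$ outputs $z_2=(a-b)^2$, since $|a-b|\le1$.
\end{itemize}
Then $ab=z_1-z_2/4\in[0,1]$. A third neuron computes this exactly, using the affine form $z_1-z_2/4$ with weights of $\ell^1$-norm $5/4$ and activation $\chi$. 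Each product thus costs three neurons with weights and Lipschitz constants bounded by $2$.

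\textbf{Step 2: all monomials.} Build each monomial $y^\beta$ by iterating Step 1 at most $D-1$ times, multiplying the current partial product, which lies in $[0,1]$, by the next variable. This uses at most $3DN$ neurons for all monomials together. The constant monomial is absorbed into the bias $c_v$ of the output neuron.

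\textbf{Step 3: output neuron.} The output neuron takes the affine form $\sum_\beta q_\beta z_\beta + q_0$, where the $q_\beta$ are the coefficients of $Q$. Its incoming $\ell^1$-norm is at most $C$, and its activation is $\chi$, which is $1$-Lipschitz. It outputs exactly $\chi(Q(y))$.

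Finally take $R=\max(3DN+1,\lceil C\rceil,2)$. This bounds the number of neurons, every incoming $\ell^1$-weight, and every Lipschitz constant.

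The main point to be careful about is that the weights stay bounded independently of the depth. Keeping every intermediate product inside $[0,1]$ through the range restrictions, and recovering it exactly rather than merely approximately, is what prevents constants from compounding along the chain. I expect this to be the main obstacle, and the choices of $\sigma$ above are designed to handle it. A secondary check is that the activations are defined on all of $\R$ with values in $[0,1]$ and are Lipschitz globally, which the clipped squares above satisfy.
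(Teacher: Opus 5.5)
Your proposal is correct and follows essentially the same route as the paper: exact multiplication gadgets built from clipped-square activations and a polarization identity, iterated to produce all monomials of degree at most $D$, then a single output neuron with activation $\chi$ whose affine form carries the coefficients of $Q$, with the constant term as bias. The only difference is cosmetic: you use $ab=((a+b)/2)^2-(a-b)^2/4$, with the extra activation $\min(1,t^2)$ to handle the possibly negative argument $a-b$, which gives three neurons per product; the paper uses $uv=2\bigl(\tfrac{u+v}{2}\bigr)^2-\tfrac12u^2-\tfrac12v^2$, where all squared arguments lie in $[0,1]$, so a single activation $\chi^2$ suffices at a cost of four neurons per product.
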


\begin{proof}
Let
\[
        \chi(t)=\min(1,\max(0,t)),
        \qquad
        \psi(t)=\chi(t)^2 .
\]
Then \(\chi,\psi:\mathbb R\to[0,1]\) are Lipschitz, with Lipschitz constants at
most \(1\) and \(2\), respectively, and on \([0,1]\) one has
\(\chi(t)=t\) and \(\psi(t)=t^2\). We define a multiplication module consisting of four neurons. Suppose two earlier vertices,
or input vertices, carry values \(u,v\in[0,1]\).  Add three square vertices
carrying
\[
        a=\psi\left(\frac{u+v}{2}\right),\qquad
        b=\psi(u),\qquad
        c=\psi(v),
\]
and then add a fourth vertex with activation \(\chi\) and linear input
\[
        2a-\frac12 b-\frac12 c .
\]
Since \(u,v\in[0,1]\), this fourth vertex carries
\[
        \chi\left(2\left(\frac{u+v}{2}\right)^2-\frac12 u^2-\frac12 v^2\right)
        =
        \chi(uv)=uv .
\]
All incoming linear \(\ell^1\)-norms are at most \(3\), and all
activation Lipschitz constants are at most \(2\).

Using the multiplication module iteratively, every monomial
\[
        y^\alpha=y_1^{\alpha_1}\cdots y_d^{\alpha_d},
        \qquad |\alpha|\leq D,
\]
can be computed exactly by at most \(4(|\alpha|-1)\) non-input vertices when
\(|\alpha|\geq2\). Note that monomials of degree \(0\) and \(1\) require no multiplication
vertices, since the constant term is handled by the final affine input and the
degree-one monomials are input coordinates.  At each stage the computed value
lies in \([0,1]\), so the next multiplication module applies.

There are only
\[
        N(d,D)=\binom{d+D}{D}
\]
monomials of degree at most \(D\).  Compute all monomials of degree at least
\(2\) by the iterative construction above, in any fixed order.  Finally add one
output vertex with activation \(\chi\) whose affine input is the linear
combination of the available monomial values with the coefficients of \(Q\).
The constant coefficient of \(Q\) is used as the bias.  The affine
\(\ell^1\)-norm at this output vertex is at most the coefficient
\(\ell^1\)-norm of \(Q\), hence at most \(C\).  The output is exactly
\(\chi(Q(y))\) on \([0,1]^d\).

Thus the number of non-input vertices is bounded by a function of \(d\) and
\(D\), for instance
\[
        1+4(D-1)N(d,D),
\]
and every incoming affine \(\ell^1\)-norm and activation Lipschitz constant is
bounded by \(\max(C,3)\).  Taking \(R\) to be the ceiling of the maximum of
these bounds proves the lemma.
\end{proof}

\begin{proposition}[Polynomial structure implies neural representation]\label{prop:poly-nn}
For every \(K\in\N\) there exists \(K'=K'(K)\in\N\) such that every function
with the \((K,\varepsilon)\)-polynomial property has the
\((K',\varepsilon)\)-bounded neural-network property, for every
\(\varepsilon\in(0,1)\).
\end{proposition}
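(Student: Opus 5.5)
Given $f$ with the $(K,\varepsilon)$-polynomial property, fix $m\le K$ linear forms $L_i(x)=\sum_j w_{ij}x_j$ with $\sum_j|w_{ij}|\le K$, and a polynomial $p$ of degree at most $K$ and coefficient $\ell^1$-norm at most $K$ such that $|f-p(L_1,\ldots,L_m)|\le\varepsilon$ on $\cube^n$. The plan is to build a network in two layers. The first layer computes rescaled versions of the $L_i$ with values in $[0,1]$. The second layer feeds these values into the network of Lemma~\ref{lem:universal}, which computes a clipped, reparametrized version of $p$. Since $f$ takes values in $[0,1]$ and $\chi$ is $1$-Lipschitz and fixes $[0,1]$, we have $|f-\chi(p(L))|\le|f-p(L)|\le\varepsilon$ pointwise. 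So it suffices to compute $\chi(p(L(x)))$ exactly.

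\textbf{Step 1 (first layer).} On $\cube^n$ we have $L_i(x)\in[-K,K]$. For each $i$, introduce a non-input neuron with activation $\chi$ and affine input $\frac12+\frac{1}{2K}L_i(x)$. Its incoming $\ell^1$-norm is at most $\frac12+\frac12\le 1$. On Boolean inputs it outputs exactly $y_i=\frac12+\frac{L_i(x)}{2K}\in[0,1]$, so $L_i=K(2y_i-1)$.

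\textbf{Step 2 (reparametrized polynomial).} Define $Q(y)=p\bigl(K(2y_1-1),\ldots,K(2y_m-1)\bigr)$. This polynomial has degree at most $K$ in $m\le K$ variables. Expanding each monomial $\prod_i (K(2y_i-1))^{e_i}$ shows that the coefficient $\ell^1$-norm of $Q$ is at most $\norm{p}_{\ell^1}\cdot(3K)^{K}\le K(3K)^K$, since $\norm{K(2y-1)}_{\ell^1}\le 3K$ and $\ell^1$-norms are submultiplicative. Then $\chi(p(L(x)))=\chi(Q(y(x)))$ for every $x\in\cube^n$.

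\textbf{Step 3 (composition).} Apply Lemma~\ref{lem:universal} with $d=m\le K$, $D=K$ and $C=K(3K)^K$. We may take the worst case $d=K$ by padding with unused variables, or take a maximum over $d\le K$. Identify the auxiliary real inputs of that network with the first-layer neurons from Step 1. This identification is the one point that needs care: the lemma is stated in an ``auxiliary real-input model'' whose inputs lie in $[0,1]$, and we must check that replacing those inputs by earlier neurons preserves the definition of a bounded Lipschitz network. The inputs $y_i$ do lie in $[0,1]$, and the bounds on incoming weights and on Lipschitz constants of the later neurons are unchanged. The resulting network has at most $K+R(K,K,K(3K)^K)$ non-input neurons. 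Every affine norm and every Lipschitz constant is bounded by $\max(1,R)$, and its output is exactly $\chi(p(L(x)))$.

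Taking $K'=K+R(K,K,K(3K)^K)$ proves the proposition. Note that $K'$ depends only on $K$ and not on $\varepsilon$ or $n$.
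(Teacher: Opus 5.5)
Your proof is correct and follows the paper's argument essentially step for step: a first layer that computes $Y_i=(L_i+K)/(2K)$ exactly with the clipped identity $\chi$, the reparametrized polynomial $Q$ with $\norm{Q}_{\ell^1}\le K(3K)^K$, composition with the network of Lemma~\ref{lem:universal} on $[0,1]^m$, and the projection property of $\chi$ to keep the error at most $\varepsilon$. Your explicit choice $K'=K+R(K,K,K(3K)^K)$ simply makes concrete the paper's instruction to pick $K'$ dominating all vertex counts, affine norms and Lipschitz constants.
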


\begin{proof}
Let $f:\cube^n\to[0,1]$ have the $(K,\varepsilon)$-polynomial property.  Thus there are $m\leq K$, linear functions
\[
        L_i(x)=\sum_{j=1}^n w_{ij}x_j,\qquad
        \sum_{j=1}^n |w_{ij}|\leq K,
\]
and a polynomial $p$ of degree at most $K$ and coefficient $\ell^1$-norm at most $K$ such that
\[
        |f(x)-p(L_1(x),\ldots,L_m(x))|\leq\varepsilon
\]
for every $x\in\cube^n$.

For $x\in\cube^n$ we have $L_i(x)\in[-K,K]$.  Define scaled affine forms
\[
        Y_i(x)=\frac{L_i(x)+K}{2K}
        =
        \frac{1}{2}+\sum_{j=1}^n \frac{w_{ij}}{2K}x_j .
\]
Then $Y_i(x)\in[0,1]$ on $\cube^n$, and
\[
        \frac{1}{2}+\sum_{j=1}^n\left|\frac{w_{ij}}{2K}\right|
        \leq
        \frac{K+\sum_j |w_{ij}|}{2K}\leq1.
\]
Hence the first layer of a network may compute the $m$ values $Y_i$ exactly on $\cube^n$ using the clipped identity activation $\chi(t)=\min(1,\max(0,t))$.  These first-layer vertices may receive edges from all \(n\) input vertices; only the total incoming \(\ell^1\)-weight is relevant to the complexity.

Define a polynomial $Q$ on $[0,1]^m$ by
\[
        Q(y_1,\ldots,y_m)
        =
        p(2Ky_1-K,\ldots,2Ky_m-K).
\]
It has degree at most $K$.  Its coefficient $\ell^1$-norm is bounded only in terms of $K$. Indeed, each monomial of degree at most $K$ in the variables $L_i=2Ky_i-K$ expands with coefficient $\ell^1$-norm at most $(3K)^K$, and the coefficient $\ell^1$-norm of $p$ is at most $K$.  Thus
\[
        \norm{Q}_{\ell^1}\leq K(3K)^K .
\]
Apply Lemma~\ref{lem:universal} with
\[
        d=m,\qquad D=K,\qquad C=K(3K)^K .
\]
This gives an auxiliary real-input network, of complexity bounded in terms of $K$ only, which computes
\[
        \chi(Q(y))=\min(1,\max(0,Q(y)))
\]
exactly on \([0,1]^m\).

Composing this real-input network with the first layer computing $Y_1,\ldots,Y_m$ gives a Boolean-input bounded Lipschitz network whose complexity is bounded by a number depending only on $K$.  The outputs of the first layer lie in $[0,1]^m$ for every Boolean input $x$, so the composed network evaluates the auxiliary real-input network only on the domain on which Lemma~\ref{lem:universal} gives exact correctness.  For every $x\in\cube^n$ the output is
\[
        \chi(Q(Y_1(x),\ldots,Y_m(x)))
        =
        \chi(p(L_1(x),\ldots,L_m(x))).
\]
Since $f(x)\in[0,1]$ and $\chi$ is the metric projection of $\R$ onto $[0,1]$,
\[
        \abs{f(x)-\chi(t)}\leq \abs{f(x)-t}
        \qquad\text{for all }t\in\R.
\]
Therefore
\[
        \abs{f(x)-\chi(p(L_1(x),\ldots,L_m(x)))}\leq\varepsilon
\]
for every $x$. Choose $K'$ to dominate simultaneously the $m\leq K$ first-layer vertices, the number of non-input vertices in the finite-dimensional realization, all incoming affine $\ell^1$-norms in both parts of the composed network, and all activation Lipschitz constants.  This $K'$ depends only on $K$, not on $\varepsilon$.  This proves the proposition.
\end{proof}

\section{From neural networks to holography}

We now prove that bounded Lipschitz networks are holographic.  The proof is an induction over the computational graph.  The basic fact is that a bounded affine average of many bounded quantities can be estimated from a bounded number of random samples.

\begin{lemma}[Sampling a bounded linear form]\label{lem:sample-affine}
Let $B\geq1$, let $z_1,\ldots,z_N\in[0,1]$ and let
\[
        L=c+\sum_{i=1}^N w_i z_i,
        \qquad
        \abs c+\sum_i\abs{w_i}\leq B.
\]
For every $\delta,\rho\in(0,1)$ there is $r=r_{\mathrm{aff}}(B,\delta,\rho)$ and a randomized procedure which queries at most $r$ of the values $z_i$ and outputs $\widehat L$ such that
\[
        \Pp(\abs{\widehat L-L}\leq\delta)\geq1-\rho.
\]
The distribution of the queried indices depends only on the coefficients $w_i$ and not on the values $z_i$.
\end{lemma}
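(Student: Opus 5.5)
The approach is importance sampling: sample indices with probability proportional to $\abs{w_i}$ and average the signed, rescaled queried values; the constant $c$ is known and needs no sampling.

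Set $W=\sum_i\abs{w_i}\leq B$. If $W=0$, output $\widehat L=c$ with no queries.

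Otherwise, define the probability measure $\nu(i)=\abs{w_i}/W$ on $[N]$. Draw independent indices $I_1,\ldots,I_r$ with law $\nu$, and put
\[
        X_j=W\,\mathrm{sgn}(w_{I_j})\,z_{I_j},
        \qquad
        \widehat L=c+\frac1r\sum_{j=1}^r X_j .
\]
Each $X_j$ takes values in $[-W,W]\subseteq[-B,B]$. Its mean is
\[
        \E X_j=\sum_i \frac{\abs{w_i}}{W}\,W\,\mathrm{sgn}(w_i)\,z_i=\sum_i w_iz_i ,
\]
so $\widehat L$ is an unbiased estimator of $L$.

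Hoeffding's inequality for $r$ independent variables with range of length at most $2B$ gives
\[
        \Pp\bigl(\abs{\widehat L-L}>\delta\bigr)\leq 2\exp\bigl(-r\delta^2/(2B^2)\bigr).
\]
It therefore suffices to take
\[
        r=r_{\mathrm{aff}}(B,\delta,\rho)=\left\lceil 2B^2\delta^{-2}\log(2/\rho)\right\rceil .
\]
This bound is independent of $N$ and of the values $z_i$.

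The procedure queries at most $r$ values, namely $z_{I_1},\ldots,z_{I_r}$; repeated indices only reduce this count. The law of the queried indices is $\nu^{\otimes r}$, which is determined by the $w_i$ alone, as the lemma requires.

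There is no real obstacle here. The only points needing care are the degenerate case $W=0$ and remembering that the Hoeffding range is $2B$ rather than $B$; both are handled above.
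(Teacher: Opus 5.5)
Your proposal is correct and follows the paper's proof essentially verbatim: importance sampling with probability $\abs{w_i}/\sum_i\abs{w_i}$, the signed rescaled estimator, and Hoeffding with $r\geq 2B^2\delta^{-2}\log(2/\rho)$. Your explicit unbiasedness computation and the range-length remark are accurate additions. The paper does not spell either of them out.
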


\begin{proof}
Let $A=\sum_i\abs{w_i}$.  If $A=0$, then $L=c$ and no query is needed.  Otherwise sample indices $I_1,\ldots,I_r$ independently with
\[
        \Pp(I_j=i)=\abs{w_i}/A.
\]
Set
\[
        Y_j=A\,\operatorname{sgn}(w_{I_j})z_{I_j},
        \qquad
        \widehat L=c+\frac1r\sum_{j=1}^rY_j.
\]
Then $\E\widehat L=L$ and $Y_j\in[-B,B]$.  Hoeffding's inequality gives
\[
        \Pp(\abs{\widehat L-L}>\delta)\leq2\exp\left(-\frac{r\delta^2}{2B^2}\right).
\]
Choosing $r\geq2B^2\delta^{-2}\log(2/\rho)$ proves the lemma.
\end{proof}

\begin{lemma}[Sampling a network]\label{lem:sample-network}
Let $F:\cube^n\to[0,1]$ be computed by a bounded Lipschitz neural network of complexity at most $K$.  For every $\delta\in(0,1)$ there exists $r=r(K,\delta)\in\N$, probability measures $\nu_1,\ldots,\nu_r$ on $[n]$, and a reconstruction function
\[
        \Phi_s:\cube^r\to[0,1]\qquad (s\in[n]^r)
\]
such that, if $S_1,\ldots,S_r$ are independent and $S_j$ has law $\nu_j$, then, for every $x\in\cube^n$,
\[
        \Pp\left(\abs{\Phi_S(x_{S_1},\ldots,x_{S_r})-F(x)}\leq\delta\right)\geq1-\delta .
\]
After the sampled locations $s$ are fixed, $\Phi_s$ is deterministic; there is no additional internal randomness.
\end{lemma}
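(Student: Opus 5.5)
We argue by induction over a topological order $v_1,\ldots,v_m$ ($m\le K$) of the non-input vertices, estimating each neuron value $z_{v}$ by an approximate value $\widehat z_v$ built from sampled input bits. The error is propagated backwards: fix tolerances $\delta_m=\delta$ at the output and choose $\delta_{j}$ for earlier vertices so that errors at earlier vertices, after passing through at most $K$ later layers with weights of $\ell^1$-norm at most $K$ and Lipschitz constants at most $K$, remain below $\delta$. Concretely, if every non-input vertex is estimated within $\tau$, and each vertex's affine input combining \emph{input} coordinates is estimated within $\tau$, then an error at vertex $v$ is at most $K(\tau+K\max_{u\prec v}\mathrm{err}(u))$. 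Unrolling along depth at most $K$ gives a bound of order $(K^2+K)^K\tau$. We therefore choose $\tau=\delta\,(2K^2)^{-K}$, which depends only on $K$ and $\delta$.

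For each non-input vertex $v$, split $L_v=c_v+\sum_{i\in[n]} w_{v,i}x_i+\sum_{u\text{ non-input}} w_{v,u}z_u$. The input part is a bounded linear form in values in $[0,1]$, with $\ell^1$-norm at most $K$, so Lemma~\ref{lem:sample-affine} applies with $B=K$, accuracy $\tau$ and failure probability $\rho=\delta/K$. It uses $r_v=r_{\mathrm{aff}}(K,\tau,\delta/K)$ i.i.d.\ samples from the measure $\abs{w_{v,i}}/A_v$ on $[n]$; if $A_v=0$, we take any measure and ignore the samples. The non-input part is evaluated exactly using the already computed estimates $\widehat z_u$ (there are at most $K$ of them, so no sampling is needed). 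Set $\widehat z_v=\sigma_v(\widehat L_v)$. The sampling measures $\nu_1,\ldots,\nu_r$ are the concatenation over all $v$ of $r_v$ copies of the measure attached to $v$, so $r=\sum_v r_v\le K\,r_{\mathrm{aff}}(K,\tau,\delta/K)$. The function $\Phi_s$ is the deterministic map sending the sampled bits to $\widehat z_{\text{out}}$: once $s$ is fixed, each $\widehat L_v$ is a fixed function of the bits $x_{s_j}$, because the signs $\operatorname{sgn}(w_{v,s_j})$ and $A_v$ are determined by $s$. Moreover, $\Phi_s$ takes values in $[0,1]$ since $\sigma_v$ does.

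By a union bound over at most $K$ vertices, with probability at least $1-\delta$ all input-part estimates are within $\tau$. On this event the inductive error bound $\abs{\widehat z_v-z_v}\le \mathrm{err}_j$ holds, with $\mathrm{err}_j\le K\tau+K^2\max_{i<j}\mathrm{err}_i$, so $\mathrm{err}_j\le (2K^2)^{j}\tau\le\delta$. The sampled index distribution is independent of $x$ because Lemma~\ref{lem:sample-affine} guarantees it depends only on the weights. Since all samples are independent across vertices as well, the $S_j$ are independent with laws $\nu_j$, as required.

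The main point needing care is this last one: the estimation error compounds through the graph, while the sampling must be non-adaptive. The approach handles it by sampling only the input-coordinate parts, whose distributions are fixed in advance, and computing the inter-neuron parts exactly from previous estimates. This keeps the procedure non-adaptive and makes the error recursion purely deterministic on the good event.
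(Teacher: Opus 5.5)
Your proof is correct, but it organizes the induction differently from the paper. The paper unrolls the network into a tree. For each vertex it re-invokes the induction hypothesis for every non-input predecessor, with fresh independent samples at accuracy $\gamma/K$ and failure probability $\rho/(2K)$, where $\gamma=\tau/(2K)$. So one neuron may be estimated many times at different precisions, and the sample count is controlled only through the recursion $R_j(\tau,\rho)\le r_{\mathrm{aff}}(K,\gamma,\rho/2)+\sum_h R_h(\gamma/K,\rho/(2K))$.

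You instead estimate each neuron exactly once. You sample only its direct input part and feed the shared estimates $\widehat z_u$ forward. Thus $\Phi_s$ is literally the original network evaluated with each neuron's input-weight vector replaced by an empirical sparsification of the same $\ell^1$-norm. Sharing is legitimate because your argument uses only a union bound over a single good event, on which the recursion $e_j\le K\tau+K^2\max_{i<j}e_i$ is deterministic. The correlations created by reusing $\widehat z_u$ are therefore irrelevant. The paper's error analysis likewise uses only a union bound, so its fresh samples per predecessor serve modularity rather than necessity.

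Your route buys a one-pass construction, a closed-form bound $r\le K\,r_{\mathrm{aff}}(K,\delta(2K^2)^{-K},\delta/K)$, and a transparent picture of the reconstruction. The paper's route buys modularity, since predecessor estimators enter only as black boxes with an accuracy guarantee. Its level-dependent precision means only the few deepest branches need the finest accuracy. So despite the repeated estimates, the two sample bounds are comparable, both of the form $\exp(O(K\log K))\,\delta^{-2}\log(2/\delta)$.

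One small point: your $r=\sum_v r_v$ depends on the number $m\le K$ of neurons. Pad with dummy samples up to $K\,r_{\mathrm{aff}}(K,\tau,\delta/K)$, as the paper does, so that $r$ depends only on $K$ and $\delta$.
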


\begin{proof}
Order the non-input vertices as \(v_1,\ldots,v_M\), where \(M\leq K\), so that
all non-input vertices feeding into \(v_j\) occur among
\(v_1,\ldots,v_{j-1}\).  Such an ordering exists because the network graph is
acyclic. We prove by induction on this order that, for every non-input vertex $v_j$ and every pair of parameters $\tau,\rho\in(0,1)$, the value $z_{v_j}(x)$ can be estimated to accuracy $\tau$ with failure probability at most $\rho$ using a number of independent coordinate samples bounded by a function $R_j(\tau,\rho)$ depending only on $K,\tau,\rho$ and $j$.  The estimator is specified by a fixed finite list of sampling measures on $[n]$, depending only on the network and on $\tau,\rho$, and by a deterministic reconstruction function depending on the sampled locations and sampled bits.  Repeated independent estimates of the same predecessor are allowed. 

Consider a vertex $v_j$.  Write its affine input as
\[
        L_{v_j}=c_{v_j}+\sum_{i=1}^n \alpha_i x_i+\sum_{\substack{u\prec v_j\\ u\ \mathrm{noninput}}} \beta_u z_u .
\]
The sum of the absolute values of all displayed coefficients, together with $\abs{c_{v_j}}$, is at most $K$.  Put
\[
        \gamma=\frac{\tau}{2K}.
\]
First estimate the direct input affine form
\[
        c_{v_j}+\sum_{i=1}^n \alpha_i x_i
\]
by Lemma~\ref{lem:sample-affine}, with $B=K$, to accuracy $\gamma$ and failure probability $\rho/2$.  This uses at most $r_{\mathrm{aff}}(K,\gamma,\rho/2)$ independent coordinate samples from the distribution proportional to $\abs{\alpha_i}$, unless all $\alpha_i$ vanish, in which case the direct input affine form is the known constant $c_{v_j}$ and no such samples are needed.

If $v_j$ has no non-input predecessors, this direct estimate, followed by the activation $\sigma_{v_j}$, gives an estimate of $z_{v_j}$; on the success event the error is at most $K\gamma\leq\tau$.  This is the base case.

In general, for each non-input predecessor $u=v_h$ with $\beta_u\ne0$, use the induction hypothesis to estimate $z_u$ to accuracy $\gamma/K$ and failure probability at most $\rho/(2K)$.  Zero-coefficient predecessors are ignored, and the number of nonzero non-input predecessors is at most the total number of non-input vertices, hence at most $K$.  Use independent samples for the different predecessor estimators and for the direct input estimator.  The complete list of samples for $v_j$ is the concatenation, in a predetermined order, of these independent lists.  Its sampling measures are therefore fixed in advance and depend only on the network and on the accuracy parameters, not on $x$ or on any sampled values.  Once all sampled locations in this concatenated list are fixed, each lower-level reconstruction is deterministic by induction, and the final reconstruction at $v_j$ is deterministic as well.

On the event that all predecessor estimates and the direct input estimate succeed, the affine form $L_{v_j}$ is estimated within at most
\[
        \gamma+\sum_{\substack{u\prec v_j\\ u\ \mathrm{noninput}}}|\beta_u|\,\frac{\gamma}{K}
        \leq \gamma+\gamma=2\gamma .
\]
Since $\Lip(\sigma_{v_j})\leq K$, the value $z_{v_j}=\sigma_{v_j}(L_{v_j})$ is then estimated within $2K\gamma=\tau$.  The union bound gives failure probability at most
\[
        \rho/2+K\cdot\frac{\rho}{2K}=\rho .
\]
The sample count satisfies the recursive bound
\[
        R_j(\tau,\rho)
        \leq
        r_{\mathrm{aff}}\!\left(K,\gamma,\frac{\rho}{2}\right)
        +
        \sum_{\substack{u=v_h\prec v_j\\ u\ \mathrm{noninput}\\ \beta_u\ne0}}
        R_h\!\left(\frac{\gamma}{K},\frac{\rho}{2K}\right),
\]
with the sum empty in the base case.  Since there are at most $K$ non-input vertices, this recursion depends only on $K,\tau,\rho$ and $j$.  The case of zero direct input weights is included by allowing the corresponding affine sample count to be zero.

Applying this construction to the output vertex with $\tau=\rho=\delta$ gives a finite list of coordinate-sampling measures of length at most a number depending only on $K$ and $\delta$.  Choose a final length
\[
        r=r(K,\delta)\geq1
\]
dominating this bound.  If fewer than $r$ samples are used in the resulting fixed list, we pad the list with dummy samples from an arbitrary fixed coordinate distribution on $[n]$ and ignore them. This is possible since $n\geq1$.  Finally, the reconstruction is clipped to $[0,1]$, which cannot increase the error because $F(x)\in[0,1]$.
\end{proof}

\begin{proposition}[Neural representation implies holography]\label{prop:nn-holo}
For every $K\in\N$ and $\varepsilon\in(0,1)$ there exists $k=k(K,\varepsilon)\in\N$ such that every function with the $(K,\varepsilon)$-bounded neural-network property is $(k,3\varepsilon)$-holographic.
\end{proposition}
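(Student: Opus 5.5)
The plan is to combine Lemma~\ref{lem:sample-network} with the triangle inequality.
Let $f:\cube^n\to[0,1]$ have the $(K,\varepsilon)$-bounded neural-network property. Then there is a bounded Lipschitz network of complexity at most $K$ computing some $F:\cube^n\to[0,1]$ with $\norm{f-F}_\infty\leq\varepsilon$.

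First, apply Lemma~\ref{lem:sample-network} to $F$ with $\delta=\varepsilon$. This gives $r=r(K,\varepsilon)$, sampling measures $\nu_1,\ldots,\nu_r$ on $[n]$, and deterministic reconstruction functions $\Phi_s:\cube^r\to[0,1]$. For every $x$, with probability at least $1-\varepsilon$,
\[
\abs{\Phi_S(x_S)-F(x)}\leq\varepsilon .
\]
Set $k=r(K,\varepsilon)$, $\mu_j=\nu_j$ and $f_s=\Phi_s$. This $k$ depends only on $K$ and $\varepsilon$, as required.

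Second, on the same success event,
\[
\abs{f(x)-f_S(x_S)}\leq\abs{f(x)-F(x)}+\abs{F(x)-\Phi_S(x_S)}\leq 2\varepsilon\leq 3\varepsilon .
\]
This event has probability at least $1-\varepsilon\geq1-3\varepsilon$. So $f$ is $(k,2\varepsilon)$-holographic, hence $(k,3\varepsilon)$-holographic, since the holographic property is monotone in the accuracy parameter: both the error bound and the failure probability only become weaker as the parameter grows.

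There is essentially no obstacle, because all the work was done in Lemma~\ref{lem:sample-network}. The only points to check are bookkeeping. The test functions must take values in $[0,1]$, which holds because the reconstruction in the lemma is clipped. The number of sampling measures must be exactly $k$, which the padding step in the lemma's proof ensures. Finally, the lemma must apply to networks of complexity at most $K$ for every dimension $n$ with $r$ independent of $n$, which it does.
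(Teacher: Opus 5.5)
Your proposal is correct and follows the paper's proof essentially verbatim. Like the paper, you apply Lemma~\ref{lem:sample-network} to $F$ with $\delta=\varepsilon$, take $f_s=\Phi_s$, and use $\norm{f-F}_\infty\leq\varepsilon$ to get error at most $2\varepsilon\leq 3\varepsilon$ with probability at least $1-\varepsilon\geq 1-3\varepsilon$.
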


\begin{proof}
Let $f$ be within $\varepsilon$ of a network output $F$ of complexity at most $K$.  Apply Lemma~\ref{lem:sample-network} to $F$ with $\delta=\varepsilon$.  We obtain $k=k(K,\varepsilon)$ independent coordinate samples, not necessarily identically distributed, and reconstruction functions $\Phi_s:\cube^k\to[0,1]$.  Define the holographic test functions by
\[
        f_s=\Phi_s,\qquad s\in[n]^k .
\]
Then, for every $x$,
\[
        \Pp(\abs{f_S(x_S)-F(x)}\leq\varepsilon)\geq1-\varepsilon.
\]
Since $\norm{f-F}_\infty\leq\varepsilon$, the same test differs from $f(x)$ by at most $2\varepsilon$ with probability at least $1-\varepsilon$.  By monotonicity in the error parameter, $2\varepsilon\leq3\varepsilon$ and $1-\varepsilon\geq1-3\varepsilon$, so $f$ is $(k,3\varepsilon)$-holographic.
\end{proof}

\section{Regularization in detail}\label{sec:regularization}

For completeness, we include a proof of the box regularity lemma in a form sufficient for this paper.  This section may be skipped by readers who are willing to use any standard weak hypergraph regularity lemma.

For a finite set $X$ with probability measures $\mu_1,\ldots,\mu_k$ and a function $h:X^k\to\R$, define the box norm
\[
        \norm{h}_{\square,k}
        =\sup_{A_1,\ldots,A_k\subseteq X}
        \left|\int_{X^k}h(x_1,\ldots,x_k)
        \prod_{j=1}^k\1_{A_j}(x_j)\,
        d\mu_1(x_1)\cdots d\mu_k(x_k)\right|.
\]
If $\mathcal P$ is a partition of $X$, let $\E(h\mid\mathcal P^k)$ be the conditional expectation of $h$ with respect to the product partition and the product measure $\mu_1\times\cdots\times\mu_k$.  On boxes of product measure zero the value of this conditional expectation may be chosen arbitrarily.

\begin{lemma}[Simultaneous weak box regularity]\label{lem:weak-box}
For every $k,t\in\N$, every $\eta>0$, and every $M_0\in\N$, there is $M=M(k,t,\eta,M_0)\in\N$ such that for every finite set $X$, every choice of probability measures $\mu_1,\ldots,\mu_k$ on $X$, every $h_1,\ldots,h_t:X^k\to[0,1]$, and every initial partition $\mathcal P_0$ of $X$ into at most $M_0$ parts, there is a refinement $\mathcal P$ of $\mathcal P_0$ into at most $M$ parts with
\[
        \norm{h_i-\E(h_i\mid\mathcal P^k)}_{\square,k}\leq\eta
        \qquad\text{for every }i\in\{1,\ldots,t\}.
\]
One may take, for instance,
\[
        M=M_0\,2^{k\lceil t\eta^{-2}\rceil}.
\]
\end{lemma}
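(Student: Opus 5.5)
We run a simultaneous energy-increment argument on the $L^2(\mu_1\times\cdots\times\mu_k)$ energy of the conditional expectations. For a partition $\mathcal P$ of $X$ set
\[
        \mathcal E(\mathcal P)=\sum_{i=1}^t\norm{\E(h_i\mid\mathcal P^k)}_{L^2(\mu_1\times\cdots\times\mu_k)}^2 .
\]
Since each $h_i$ takes values in $[0,1]$, we have $0\le\mathcal E(\mathcal P)\le t$. Refining $\mathcal P$ refines $\mathcal P^k$, so $\mathcal E$ is monotone under refinement by the Pythagorean identity for nested conditional expectations.

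We start from $\mathcal P=\mathcal P_0$ and iterate. If every $h_i$ satisfies $\norm{h_i-\E(h_i\mid\mathcal P^k)}_{\square,k}\le\eta$, we stop. Otherwise some $i$ and some sets $A_1,\ldots,A_k\subseteq X$ witness
\[
        \left|\langle h_i-\E(h_i\mid\mathcal P^k),\,\1_{A_1\times\cdots\times A_k}\rangle\right|>\eta .
\]
We then let $\mathcal P'$ be the common refinement of $\mathcal P$ by all $k$ sets $A_1,\ldots,A_k$. Each part splits into at most $2^k$ pieces, so $|\mathcal P'|\le 2^k|\mathcal P|$. The box $A_1\times\cdots\times A_k$ is a union of cells of $\mathcal P'^k$, so $\1_{A_1\times\cdots\times A_k}$ is $\mathcal P'^k$-measurable.

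The key step is the energy increment. Write $g=\E(h_i\mid\mathcal P'^k)-\E(h_i\mid\mathcal P^k)$. Because the box indicator is $\mathcal P'^k$-measurable,
\[
        \langle h_i-\E(h_i\mid\mathcal P^k),\1_{A}\rangle=\langle g,\1_A\rangle .
\]
Cauchy--Schwarz, with $\norm{\1_A}_{L^2}\le1$, then gives $\norm{g}_{L^2}>\eta$. Orthogonality of $g$ to $\mathcal P^k$-measurable functions gives
\[
        \norm{\E(h_i\mid\mathcal P'^k)}^2=\norm{\E(h_i\mid\mathcal P^k)}^2+\norm g^2 .
\]
The other summands of $\mathcal E$ do not decrease, so $\mathcal E(\mathcal P')>\mathcal E(\mathcal P)+\eta^2$.

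Since $\mathcal E\le t$, the process stops after at most $\lceil t\eta^{-2}\rceil$ steps. The final partition refines $\mathcal P_0$ and has at most $M_0\,2^{k\lceil t\eta^{-2}\rceil}$ parts, which is the claimed $M$.

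The main obstacle is making the conditional-expectation formalism precise when the measures $\mu_j$ differ and some product cells have measure zero. We work throughout in $L^2$ of the product measure $\mu_1\times\cdots\times\mu_k$. The $\sigma$-algebra generated by $\mathcal P^k$ is a product of partitions in each coordinate, and it sits inside that of $\mathcal P'^k$, so conditional expectations are nested orthogonal projections. On null cells the arbitrary choice of value changes nothing in any of the integrals or norms above, since they are all taken against this product measure. Given this, the rest is the routine Frieze--Kannan style computation sketched above.
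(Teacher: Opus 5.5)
Your proof is correct and follows essentially the same route as the paper. Both arguments use the simultaneous energy increment on $\mathcal E(\mathcal P)=\sum_i\norm{\E(h_i\mid\mathcal P^k)}_2^2$ and refine by the witnessing sets $A_1,\ldots,A_k$; your step $\langle h_i-\E(h_i\mid\mathcal P^k),\1_A\rangle=\langle g,\1_A\rangle$ followed by Cauchy--Schwarz is exactly the paper's lower bound on $\norm{\E(r\mid\mathcal Q^k)}_2$, giving at most $\lceil t\eta^{-2}\rceil$ steps and at most $M_0\,2^{k\lceil t\eta^{-2}\rceil}$ parts.
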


\begin{proof}
Start with the initial partition $\mathcal P_0$.  For a partition $\mathcal P$, define the total energy
\[
        \mathcal E(\mathcal P)=\sum_{i=1}^t
        \norm{\E(h_i\mid\mathcal P^k)}_2^2,
\]
where the $L^2$ norm is taken with respect to $\mu_1\times\cdots\times\mu_k$.  Since $0\leq h_i\leq1$, we have $0\leq\mathcal E(\mathcal P)\leq t$.

If the current partition $\mathcal P$ does not satisfy the conclusion, then for some $i$ there are sets $A_1,\ldots,A_k\subseteq X$ such that, with
\[
        r=h_i-\E(h_i\mid\mathcal P^k),
        \qquad
        \phi=\prod_{j=1}^k\1_{A_j},
\]
we have
\[
        \abs{\langle r,\phi\rangle}>\eta .
\]
In particular $\norm{\phi}_2\ne0$, so the division by $\norm{\phi}_2$ below is legitimate.  Refine $\mathcal P$ by all the sets $A_1,\ldots,A_k$, and call the resulting partition $\mathcal Q$.  Then $\phi$ is $\mathcal Q^k$-measurable.  Since $\norm{\phi}_2\leq1$,
\[
        \norm{\E(r\mid\mathcal Q^k)}_2
        \geq
        \frac{\abs{\langle \E(r\mid\mathcal Q^k),\phi\rangle}}{\norm{\phi}_2}
        =
        \frac{\abs{\langle r,\phi\rangle}}{\norm{\phi}_2}
        \geq \eta .
\]
Because $\mathcal Q$ refines $\mathcal P$, the martingale difference identity gives
\[
        \norm{\E(h_i\mid\mathcal Q^k)}_2^2
        -
        \norm{\E(h_i\mid\mathcal P^k)}_2^2
        =
        \norm{\E(r\mid\mathcal Q^k)}_2^2
        \geq \eta^2 .
\]
The energies of the other functions do not decrease under refinement.  Hence the total energy increases by at least $\eta^2$.

The process therefore stops after at most $\lceil t\eta^{-2}\rceil$ refinement steps.  At each step refining by $k$ sets multiplies the number of atoms by at most $2^k$.  Thus the final number of atoms is at most $M_0\,2^{k\lceil t\eta^{-2}\rceil}$.
\end{proof}

\begin{proof}[Proof of Theorem~\ref{thm:box-regularity}]
Apply Lemma~\ref{lem:weak-box} to $g_1,\ldots,g_t$ starting with the trivial partition.  We obtain a common partition $\mathcal P=\{T_1,\ldots,T_m\}$ with
\[
        m\leq M(k,t,\eta,1)=2^{k\lceil t\eta^{-2}\rceil}
\]
and
\[
        \norm{g_i-\E(g_i\mid\mathcal P^k)}_{\square,k}\leq\eta
\]
for all $i$.  Writing $W_i(u)$ for the value of the conditional expectation on the box $T_{u_1}\times\cdots\times T_{u_k}$ gives the desired formula.  Since each $g_i$ is $[0,1]$-valued, these conditional expectations may be chosen in $[0,1]$.  If such a box has product measure zero, choose $W_i(u)$ arbitrarily in $[0,1]$; its contribution to the formula is zero.  Indeed, if $\prod_{j=1}^k\mu_j(T_{u_j})=0$, then for every product test set $A_1\times\cdots\times A_k$ one has
\[
        \prod_{j=1}^k\mu_j(A_j\cap T_{u_j})=0,
\]
so the corresponding summand is unaffected by the chosen value of $W_i(u)$.
\end{proof}

\section{Equivalence of identical and non-identical sampling}\label{sec:idvsnonid}

In the definition of holographic functions used above, the different
query positions are allowed to have different sampling measures $\mu_1,\ldots,\mu_k$ on the coordinate set \([n]\).  In this section we show that, at the qualitative
level (see Definition \ref{qeq}), this is equivalent to the apparently more restrictive version in which
all query positions are sampled independently from one common measure.
The identically sampled version is plainly a special case of the
non-identically sampled version.  The nontrivial direction is that different
measures can be simulated by taking more samples from their average.

\begin{definition}[Identically sampled holography]
A function \(f:\cube^n\to[0,1]\) is called \emph{identically sampled
\((k,\varepsilon)\)-holographic} if it satisfies the definition of
\((k,\varepsilon)\)-holography with sampling measures
\(\mu_1=\cdots=\mu_k=\mu\) for some single probability measure \(\mu\) on
\([n]\).
\end{definition}

\begin{proposition}\label{prop:identical-nonidentical-equivalence}
Let \(k\in\mathbb N\) and \(0<\varepsilon<1\).  Put
\[
        \alpha=\frac{\varepsilon^2}{4},
        \qquad
        \eta=\frac{\varepsilon^2}{2}.
\]
Suppose that \(f:\{0,1\}^n\to[0,1]\) is
\((k,\alpha)\)-holographic with non-identical sampling.  Then \(f\) is
identically sampled \((r,\varepsilon)\)-holographic for every integer
\(r\) satisfying
\[
        k e^{-r/k}\leq \eta .
\]
In particular, it is enough to take
\[
        r\geq k\log\frac{k}{\eta}
        =
        k\log\frac{2k}{\varepsilon^2}.
\]
Consequently, the identically sampled and non-identically sampled versions of
holography define the same qualitative notion after changing the complexity
and accuracy parameters.
\end{proposition}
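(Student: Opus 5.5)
We will simulate the non-identical sampling with samples from the single averaged measure
\[
        \mu=\frac1k\sum_{j=1}^k\mu_j .
\]
The simulation uses the standard mixture decomposition. Draw $r$ independent samples $T_1,\ldots,T_r$ from $\mu$. Couple each $T_i$ with an independent label $J_i$, uniform on $[k]$, so that conditionally on $J_i=j$ the sample $T_i$ has law $\mu_j$. Equivalently, given $T_i=t$, the label $J_i$ has conditional law $\mu_j(t)/(k\mu(t))$; this is a fixed kernel depending only on $t$.

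The new test function will be a randomized-looking but ultimately deterministic procedure, built in three steps.
\begin{enumerate}[label=\textup{(\roman*)}]
    \item From the sampled locations $t=(t_1,\ldots,t_r)$ we produce the labels. Because the definition requires a deterministic function of $s$ and the sampled bits, we first describe the procedure with auxiliary randomness and remove it at the end.
    \item For each $j\in[k]$ we take the first index $i$ with $J_i=j$, and feed the corresponding bits to $f_s$ with $s=(T_{i_1},\ldots,T_{i_k})$.
    \item If some label $j$ never appears, we output an arbitrary value, say $0$.
\end{enumerate}
Conditionally on every label appearing, the selected locations are independent with laws $\mu_1,\ldots,\mu_k$. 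The first occurrences are stopping-type selections on an i.i.d.\ labelled sequence, and the label pattern is independent of the conditional laws of the positions given their labels. The probability that some label is missing is at most $k(1-1/k)^r\leq ke^{-r/k}\leq\eta$.

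Hence, for every $x$, the randomized test errs by more than $\alpha$ with probability at most $\alpha+\eta\leq\varepsilon^2/4+\varepsilon^2/2<\varepsilon^2$.

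The main obstacle is removing the internal randomness of the labels. The plan is to average it out, as in Lemma~\ref{lem:test-average}. For fixed locations $t$ and bits $\beta=x_t$, define
\[
        f'_t(\beta)=\E_J\bigl[\text{output}\mid T=t,\ x_T=\beta\bigr],
\]
where the expectation is over the conditional label kernel and the output is the procedure of steps \textup{(i)}--\textup{(iii)}. This is deterministic and $[0,1]$-valued.

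To control it, let $p(t)$ be the conditional probability, over $J$, that the randomized output deviates from $f(x)$ by more than $\alpha$. Then
\[
        \E_T\, p(T)\leq\alpha+\eta=\tfrac34\varepsilon^2 .
\]
By Markov's inequality, $\Pp(p(T)>\varepsilon/2)\leq\tfrac32\varepsilon$. This is too weak, so we instead apply Markov's inequality at threshold $\varepsilon$, which gives $\Pp(p(T)>\varepsilon)\leq\tfrac34\varepsilon\leq\varepsilon$.

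On the complementary event $p(T)\leq\varepsilon$, the averaged value satisfies
\[
        \abs{f'_T(x_T)-f(x)}\leq\alpha(1-p)+p .
\]
Since $p$ is at most $\varepsilon$, this is bounded by $\alpha+\varepsilon$, which slightly exceeds $\varepsilon$. We will fix this by splitting more carefully. Use threshold $\varepsilon/2$ for $p$ and accuracy $\alpha\leq\varepsilon/2$, so that the error on the good event is at most $\varepsilon/2+\varepsilon/2=\varepsilon$. The failure probability is then $\Pp(p(T)>\varepsilon/2)\leq 2(\alpha+\eta)/\varepsilon=\tfrac32\varepsilon$.

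If this constant does not close as stated, we will use the cleaner bound $\E p\leq\alpha+\eta$, with $\alpha,\eta$ as given, together with a threshold $\theta$ satisfying $\theta+\alpha\leq\varepsilon$ and $(\alpha+\eta)/\theta\leq\varepsilon$. The choice $\theta=3\varepsilon/4$ works: it gives $\theta+\alpha\leq 3\varepsilon/4+\varepsilon/4=\varepsilon$ (using $\varepsilon^2\leq\varepsilon$) and $(\alpha+\eta)/\theta=\varepsilon$. This exactly matches the stated parameters, and it is the choice we will adopt.

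The final claim of the proposition, that the two versions define the same qualitative notion, then follows from Definition~\ref{qeq}. We take $\delta(\varepsilon)=\varepsilon^2/4$ and $K'=\lceil k\log(2k/\varepsilon^2)\rceil$. The reverse implication is trivial, since identically sampled holography is a special case of the non-identically sampled definition.
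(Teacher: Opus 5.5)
Your proposal is correct and follows essentially the same route as the paper. Both simulate $\mu_1,\ldots,\mu_k$ by hidden uniform labels under the mixture $\mu$, select first occurrences, bound the missing-label event by $ke^{-r/k}\le\eta$, and remove the labels by averaging over the posterior label kernel given the locations. The only difference is the last step: you apply Markov to the conditional failure probability $p(T)$ at threshold $3\varepsilon/4$ and bound the error by $\alpha+p(T)$, whereas the paper bounds $\mathbb{E}\lvert Y-f(x)\rvert\le 2\alpha+\eta=\varepsilon^2$ and then applies Jensen and Markov to the absolute error at threshold $\varepsilon$. Both close exactly with the stated $\alpha,\eta$; in a final write-up you should drop the abandoned thresholds $\varepsilon$ and $\varepsilon/2$.
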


\begin{proof}
Assume that \(f\) is \((k,\alpha)\)-holographic with non-identical sampling.
Thus there are probability measures
\[
        \mu_1,\ldots,\mu_k
\]
on \([n]\), and test functions
\[
        f_s:\{0,1\}^k\to[0,1],
        \qquad s\in[n]^k,
\]
such that for every \(x\in\{0,1\}^n\),
\[
        \mathbb P\left(
        \left|f(x)-f_S(x_{S_1},\ldots,x_{S_k})\right|
        \leq \alpha
        \right)
        \geq 1-\alpha ,
\]
where \(S_i\) are independent and \(S_i\sim\mu_i\).

Define the average measure
\[
        \mu=\frac1k\sum_{i=1}^k \mu_i .
\]
A sample from \(\mu\) may be generated in two steps: first choose a hidden label
\(I\in[k]\) uniformly, and then sample from \(\mu_I\).  More precisely, let
\[
        I_1,\ldots,I_r
\]
be independent uniform random variables on \([k]\), and, conditional on these
labels, let
\[
        T_j\sim \mu_{I_j}
\]
independently for \(j=1,\ldots,r\).  Then the variables \(T_1,\ldots,T_r\) are
independent and each has law \(\mu\).

Let \(E\) be the event that every label \(1,\ldots,k\) appears among
\(I_1,\ldots,I_r\).  By the union bound,
\[
        \mathbb P(E^c)
        \leq
        k\left(1-\frac1k\right)^r
        \leq
        k e^{-r/k}
        \leq \eta .
\]

On the event \(E\), define \(\tau_i\) to be the first index \(j\) such that
\(I_j=i\).  Thus
\[
        I_{\tau_i}=i,
        \qquad i=1,\ldots,k.
\]
The selected coordinates
\[
        T_{\tau_1},\ldots,T_{\tau_k}
\]
are independent and have respective laws
\[
        \mu_1,\ldots,\mu_k.
\]
Indeed, conditional on any label sequence for which \(E\) holds, the selected
coordinate \(T_{\tau_i}\) is sampled from \(\mu_i\), and the selected
coordinates for distinct labels are independent.

For a fixed input \(x\), define the labelled reconstruction
\[
        Y
        =
        f_{(T_{\tau_1},\ldots,T_{\tau_k})}
        (x_{T_{\tau_1}},\ldots,x_{T_{\tau_k}})
\]
on the event \(E\).  On \(E^c\), define \(Y=0\), say.  Since on \(E\) the
selected coordinates have the same joint distribution as the original
non-identically sampled tuple, we have
\[
        \mathbb P\left(|Y-f(x)|>\alpha\right)
        \leq
        \alpha+\eta .
\]
Because \(Y\in[0,1]\) and \(f(x)\in[0,1]\), it follows that
\[
        \mathbb E |Y-f(x)|
        \leq
        \alpha+\mathbb P(|Y-f(x)|>\alpha)
        \leq
        2\alpha+\eta .
\]

So far the reconstruction used the hidden labels \(I_1,\ldots,I_r\).  The
definition of identical sampling, however, only permits the test function to
depend on the observed locations \(T_1,\ldots,T_r\) and on the observed bits.
We now remove the labels by conditional expectation.  This posterior averaging
is only part of the construction of the deterministic test functions; the
eventual identical-sampling procedure itself samples only the locations from
the single measure \(\mu\).

For \(t=(t_1,\ldots,t_r)\in[n]^r\), define a probability distribution on label
tuples \(i=(i_1,\ldots,i_r)\in[k]^r\) by the posterior rule
\[
        \mathbb P(I_1=i_1,\ldots,I_r=i_r\mid T_1=t_1,\ldots,T_r=t_r)
        =
        \prod_{j=1}^r
        \frac{\mu_{i_j}(t_j)}
        {\sum_{\ell=1}^k \mu_\ell(t_j)}
\]
whenever the denominators are nonzero.  This is just Bayes' formula with
respect to the auxiliary hidden-label construction: the factor
\((1/k)\mu_{i_j}(t_j)\) in the joint law is divided by the mixture mass
\(\mu(t_j)=(1/k)\sum_{\ell=1}^k\mu_\ell(t_j)\), so the factors \(1/k\) cancel.
If a denominator is zero, the value is irrelevant because such a location is
never sampled from \(\mu\), and we may choose the conditional distribution
arbitrarily.

Given \(t\in[n]^r\) and \(a=(a_1,\ldots,a_r)\in\{0,1\}^r\), define
\[
        g_t(a)
\]
to be the conditional expectation, over these posterior labels, of the labelled
reconstruction described above, with the observed bits \(a_j\) substituted for
\(x_{t_j}\).  More explicitly, for each label tuple \(i\), if every label
\(1,\ldots,k\) occurs among \(i_1,\ldots,i_r\), let \(\tau_\ell(i)\) be the
first index \(j\) with \(i_j=\ell\), and set
\[
        Y_i(t,a)
        =
        f_{(t_{\tau_1(i)},\ldots,t_{\tau_k(i)})}
        (a_{\tau_1(i)},\ldots,a_{\tau_k(i)}).
\]
If not every label occurs, set \(Y_i(t,a)=0\).  Then define
\[
        g_t(a)
        =
        \mathbb E\bigl[ Y_I(t,a)\mid T=t\bigr].
\]
This is a deterministic function
\[
        g_t:\{0,1\}^r\to[0,1].
\]

For the actual input \(x\), the random variable
\[
        g_T(x_{T_1},\ldots,x_{T_r})
\]
is exactly
\[
        \mathbb E(Y\mid T_1,\ldots,T_r,x_{T_1},\ldots,x_{T_r}).
\]
Therefore, by Jensen's inequality,
\[
\begin{aligned}
        \left|
        g_T(x_{T_1},\ldots,x_{T_r})-f(x)
        \right|
        &=
        \left|
        \mathbb E(Y-f(x)\mid T_1,\ldots,T_r,x_{T_1},\ldots,x_{T_r})
        \right|                                                   \\
        &\leq
        \mathbb E\left(
        |Y-f(x)|
        \mid T_1,\ldots,T_r,x_{T_1},\ldots,x_{T_r}
        \right).
\end{aligned}
\]
Taking expectations gives
\[
        \mathbb E
        \left|
        g_T(x_{T_1},\ldots,x_{T_r})-f(x)
        \right|
        \leq
        \mathbb E |Y-f(x)|
        \leq
        2\alpha+\eta .
\]
By Markov's inequality,
\[
        \mathbb P\left(
        \left|
        g_T(x_{T_1},\ldots,x_{T_r})-f(x)
        \right|
        >\varepsilon
        \right)
        \leq
        \frac{2\alpha+\eta}{\varepsilon}.
\]
With our choices
\[
        \alpha=\frac{\varepsilon^2}{4},
        \qquad
        \eta=\frac{\varepsilon^2}{2},
\]
we have
\[
        2\alpha+\eta=\varepsilon^2.
\]
Hence
\[
        \mathbb P\left(
        \left|
        g_T(x_{T_1},\ldots,x_{T_r})-f(x)
        \right|
        >\varepsilon
        \right)
        \leq \varepsilon .
\]
This holds for every \(x\in\{0,1\}^n\).  Therefore \(f\) is
identically sampled \((r,\varepsilon)\)-holographic with identical sampling
from the single measure \(\mu\).
\end{proof}

\section*{Acknowledgements}
The author acknowledges the use of ChatGPT as a writing aid during the preparation of this manuscript, specifically for improving grammar, style, and exposition. All mathematical ideas, statements, and proofs were developed by the author. 
This research was supported by the NKFIH excellence 154126 grant.

\end{document}